\newcommand{\id}{\mathrm{id}}
\newcommand{\U}{\mathcal U}
\newcommand{\cf}{\mathrm{cf}}
\newcommand{\w}{\omega}
\newcommand{\Ra}{\Rightarrow}
\newcommand{\F}{\mathcal F}
\newcommand{\supp}{\mathrm{supp}}
\newcommand{\pr}{\mathrm{pr}}
\newcommand{\e}{\varepsilon}
\newcommand{\IN}{\mathbb N}
\newcommand{\IZ}{\mathbb Z}
\newtheorem{theorem}{Theorem}[section]
\newtheorem{problem}[theorem]{Problem}
\newtheorem{claim}[theorem]{Claim}
\newtheorem{proposition}[theorem]{Proposition}
\theoremstyle{definition}
\newtheorem{example}[theorem]{Example}
\newtheorem{remark}[theorem]{Remark}
\newtheorem{definition}[theorem]{Definition}
\title{Topologizations of a set endowed with an action of a monoid}
\author{Taras Banakh, Igor Protasov, Olga Sipacheva}
\address{T. Banakh: Ivan Franko National University of Lviv (Ukraine) and
Jan Kochanowski University in Kielce (Poland)}
\email{t.o.banakh@gmail.com}
\address{I. Protasov:
Taras Shevchenko National University, Kyiv (Ukraine)}
\email{i.v.protasov@gmail.com}
\address{O. Sipacheva: Lomonosov Moscow State University, Moscow (Russia)}
\email{o-sipa@yandex.ru}
\date{}
\dedicatory{Dedicated to Mitrofan Choban and Stoyan Nedev on the occasion of their 70th
birthdays}
\keywords{Zariski topology, $G$-act, non-discrete topologization}
\subjclass{54H15}
\begin{document}

\begin{abstract}
Given a set $X$ and a family $G$ of self-maps of $X$, we study the problem of the
existence of a non-discrete Hausdorff topology on $X$ with respect to which all functions $f\in G$
are continuous. A topology on $X$ with this property is called a {\em $G$-topology}. The answer is
given in terms of the Zariski $G$-topology $\zeta_G$ on $X$, that is, the topology generated
by the subbase consisting of the sets $\{x\in X:f(x)\ne g(x)\}$ and $\{x\in X:f(x)\ne c\}$, where
$f,g\in G$ and $c\in X$. We prove that, for a countable monoid $G\subset X^X$, $X$ admits a
non-discrete Hausdorff $G$-topology if and only if the Zariski $G$-topology $\zeta_G$ is
non-discrete; moreover, in this case, $X$ admits $2^{\mathfrak c}$ hereditarily normal $G$-topologies.
\end{abstract}

\maketitle

\section{Principal Problems}
In this paper we consider the following general problem.

\begin{problem}\label{pr1} Given a set $X$ and a family $G$ of self-maps of $X$, determine whether
$X$ admits a non-discrete Hausdorff (or normal) topology with respect to which all functions $g\in
G$ are continuous. \end{problem}

Since the composition of continuous functions is continuous, we can assume without loss of
generality that the family $G$ is a subsemigroup in the semigroup $X^X$ of all functions $X\to X$
endowed with the operation of composition. Also, we can assume that $G$ contains the identity
function $\id_X$ of $X$ and, hence, is a submonoid of $X^X$. Thus, it is natural to consider
Problem~\ref{pr1} in the context of actions of monoids.

%{\em $G$-acts}, i.e., sets on which a monoid
%$G$ acts. %\cite{KKM}.
Let $G$ be a monoid with two-sided unit $1_G$. A {\em left
unitary action},  of a monoid $G$ on a set $X$ is a function $\alpha:G\times X\to X$,
$\alpha:(g,x)\mapsto g(x)$, with the following two properties:
\begin{itemize}
\item $f(g(x))=(fg)(x)$ for all $f,g\in G$ and $x\in X$  and
\item $1_G(x)=x$ for all $x\in X$.
\end{itemize}
A set $X$ endowed with an action of a monoid $G$  is called a (left unitary) {\em $G$-act},
or an {\em act over $G$} (this is a standard term of semigroup theory; see, e.g., \cite{KKM}). A
topology $\tau$ on a $G$-act $X$ is called a {\em $G$-topology} if, for every $g\in G$, the shift
$g:X\to X$, $g:x\mapsto g(x)$, is continuous.
 A $G$-act $X$ is said to be ({\em normally}) {\em $G$-topologizable}
if $X$ admits a non-discrete (normal) Hausdorff $G$-topology. A topology $\tau$ on a set $X$ is
called {\em normal} if the topological space $(X,\tau)$ is normal, that is, $X$ is a $T_1$-space
such that any two disjoint closed subsets in $X$ have disjoint open neighborhoods.
A topology $\tau$ on a set $X$ is {\em hereditarily normal} if each subspace
of the topological space $(X,\tau)$ is normal.

In this terminology, Problem~\ref{pr1} can be rewritten as follows.

\begin{problem}\label{pr2} Find  necessary and sufficient conditions for the (normal)
$G$-topologizability of a given $G$-act $X$.
\end{problem}

For acts over a group $G$, this problem was considered in \cite{BPS}.

Problem~\ref{pr2} is motivated by Markov's celebrated problem on the existence of a
non-discrete Hausdorff group topology on an arbitrary infinite group~\cite{Markov1945}, which is
closely related to the problem of whether any unconditionally closed subset of a group $G$ (i.e., a
set closed in any Hausdorff group topology on $G$) is algebraic (i.e., is an intersection of finite
unions of solution sets of equations in $G$), which was also posed by Markov in~\cite{Markov1945}.
By the definition of a group topology, any algebraic set must be unconditionally closed; on the
other hand, a group admits a non-discrete Hausdorff group topology if and only if the complement to
the identity (or any other) element in this group is not unconditionally closed. In \cite{Mar} (see
also \cite{Markov1944}), Markov proved the coincidence of unconditionally closed sets with
algebraic sets for countable groups; a general (negative) answer was given only in 1979 by
Hesse~\cite{Hesse}, who constructed an example of an uncountable  non-topologizable group in which
the complement to the identity element is not algebraic. The first countable
non-topologizable group was given in 1980 by Ol'shanskii~\cite{Ol80}; other examples
were constructed in~\cite{KT} and~\cite{KOO}. In contrast to these negative results, Zelenyuk \cite{Zel07} proved that each infinite group $G$ admits a non-discrete regular topology with continuous shifts and continuous inversion.

%The study of topologizability conditions for general algebraic systems was initiated by Podewski~\cite{Podewski} and Hesse~\cite{Hesse}.

The family of algebraic subsets of a group $G$ coincides with the family
of closed subsets of a $T_1$ topology $\zeta_G$ on $G$, called the \emph{Zariski topology}.
Markov did not explicitly introduce this topology, although he
implicitly considered it in the form of algebraic closures of sets~\cite{Mar, Markov1945}.
The Zariski topology was explicitly introduced only in 1977 by Bryant \cite{Br} under the name
verbal topology; the term was introduced by Dikranjan and Shakhmatov~\cite{DikShakh}.
This topology was studied in, e.g., \cite{Br, Sipa, DS2,
DT1, DT}.

The family of unconditionally closed subsets of $G$ coincides with the family of closed subsets in
another $T_1$ topology on $G$, namely, the infimum of all Hausdorff group topologies on $G$.
This topology was explicitly introduced in \cite{DS1} under the name {\em Markov
topology}.

Thus, Markov's theorem on the coincidence of algebraic and unconditionally closed sets in
countable groups asserts that, in such groups, the Zariski topology coincides with the Markov
topology, or, in other words, that a countable group admits a non-discrete Hausdorff topology
if and only if the Zariski topology of this group is non-discrete. In this paper, we obtain a
similar result for $G$-acts (see Theorem~\ref{main}); namely, we prove that an act over a countable
monoid $G$ is $G$-topologizable if and only if the Zariski $G$-topology on $X$, which is defined
and studied in Section~\ref{s:zar}, is non-discrete. We also show that, in this case, $X$ admits
a non-discrete metrizable $G$-topology and $2^{\mathfrak c}$ non-discrete hereditarily normal
$G$-topologies. This fact can be compared with a result of Dikranjan and Protasov \cite{DP} saying that each topologizable countable group $G$ admits $2^{\mathfrak c}$ (pairwise transversal) topologies.

The paper is organized as follows. In Section~\ref{s:zar}, we introduce the Zariski topology on an
$S$-act over a monoid $S$ and consider particular cases of $S$-acts $G$, where $G$ is a semigroup
or a group and $S$ is a submonoid of $G^G$. The technical Section~\ref{s:spec} describes
properties of maximal $G$-topologies on $G$-acts in which the so-called discriminate filters (defined
in the same section) converge. Section~\ref{s:existence} contains conditions for the
existence of discriminate filters on $G$-acts, given in terms of the Zariski topology $\zeta_G$. The last section contains the proofs of the main results on the topologizability of
$G$-acts.

\section{The Zariski $G$-topology on a $G$-act}\label{s:zar}

In this section we define the Zariski $G$-topology on a $G$-act $X$ and study this topology for
particular examples of $G$-acts.

\begin{definition} For an act $X$ over a monoid $G$, the {\em Zariski $G$-topology} $\zeta_G$ on
$X$ is the topology generated by the subbase $\tilde\zeta_G$ consisting of the sets $\{x\in
X:f(x)\ne g(x)\}$ and $\{x\in X:f(x)\ne c\}$, where $f,g\in G$ and $c\in X$.
\end{definition}

The following easy fact follows directly from the definition.

\begin{proposition}
\label{Zarisweak}
For any $G$-act $X$, the Zariski $G$-topology $\zeta_G$ satisfies the
separation axiom $T_1$ and is contained in any Hausdorff $G$-topology on $X$.
\end{proposition}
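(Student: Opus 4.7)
The plan is to verify the two assertions separately: first that $\zeta_G$ is $T_1$, and second that any Hausdorff $G$-topology $\tau$ on $X$ refines $\zeta_G$. Both reductions go through the subbase $\tilde\zeta_G$, so it suffices to exhibit the relevant subbasic sets in the first case and to verify that they are $\tau$-open in the second.

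For the $T_1$ property, I would fix two distinct points $x,y\in X$ and produce a $\zeta_G$-open neighborhood of $y$ missing $x$. Since the paper assumes $1_G=\id_X\in G$, the set $U_x:=\{z\in X:1_G(z)\ne x\}=X\setminus\{x\}$ belongs to the subbase $\tilde\zeta_G$. Thus every singleton is $\zeta_G$-closed, which is exactly the $T_1$ axiom.

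For the second assertion, I would take an arbitrary Hausdorff $G$-topology $\tau$ on $X$ and show $\tilde\zeta_G\subseteq\tau$, which forces $\zeta_G\subseteq\tau$. There are two kinds of subbasic sets to handle. Sets of the form $\{x:f(x)\ne c\}$ are easy: since $\tau$ is $T_1$, $\{c\}$ is $\tau$-closed; since $f$ is $\tau$-continuous, $\{x:f(x)\ne c\}=f^{-1}(X\setminus\{c\})$ is $\tau$-open. Sets of the form $\{x:f(x)\ne g(x)\}$ use Hausdorffness in an essential way: the diagonal $\Delta_X\subseteq X\times X$ is closed in the product topology $\tau\times\tau$, and the map $h:X\to X\times X$, $h(x)=(f(x),g(x))$, is $\tau$-continuous because $f,g$ are. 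Therefore $\{x:f(x)\ne g(x)\}=h^{-1}((X\times X)\setminus\Delta_X)$ is $\tau$-open.

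Neither step presents a real obstacle; the only mildly subtle point is remembering that $G$ is assumed to contain $1_G$ (so that every singleton arises as a subbasic $\zeta_G$-closed set) and that Hausdorffness of $\tau$ is precisely what is needed to push the subbasic sets of the first type into $\tau$ via the diagonal argument.
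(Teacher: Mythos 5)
Your proof is correct and is exactly the standard argument the paper leaves implicit when it says the fact ``follows immediately from the definition'': use the subbasic set $\{x:1_G(x)\ne c\}=X\setminus\{c\}$ for $T_1$, and continuity of the shifts plus closedness of the diagonal (resp.\ of singletons) to place both kinds of subbasic sets into any Hausdorff $G$-topology. The only blemish is a harmless label swap in your closing sentence, where ``first type'' refers to the sets $\{x:f(x)\ne g(x)\}$ that you actually treated second.
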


An immediate corollary of this observation is that $\zeta_G$ is always non-discrete for a
$G$-topologizable $G$-act. In the case of countable $G$-acts, the non-discreteness of $\zeta_G$
turns out to be sufficient for $G$-topologizability (see Theorem~\ref{main}).

The subbase
$\tilde\zeta_G$ of the Zariski $G$-topology $\zeta_G$
has a natural cardinal invariant $\psi(x,\tilde\zeta_G)$, called the pseudocharacter of
$\tilde\zeta_G$ at a point $x\in X$. In fact,  $\psi(x,\F)$ can be defined for
any family $\F$ of subsets of $X$.
Given a point $x\in X$, we set
$$
\F_x=\{X\}\cup\{F\in\F:x\in F\}
$$
and define the {\em pseudocharacter} of $\F$ at $x$ as
$$
\psi(x,\F)=\min\{|\U|:\U\subset\F_x\mbox{ \ and ${\bigcap}\,\U=\bigcap\,\F_x$}\}.
$$
If $\tau$ is the topology on $X$ generated by a subbase $\F$, then $\tau_x$ is the family of
all open neighborhoods of $x$, and $\psi(x,\tau)$ is the usual pseudocharacter
of the topological space $(X,\tau)$  at the point $x$. It is easy to see that
$\psi(x,\tau)=\psi(x,\F)$ for any non-isolated point $x$ in $(X,\tau)$. If $x$ is isolated in
$(X,\tau)$, then $\psi(x,\tau)=1$, while $1\le \psi(x,\F)<\aleph_0$, so the pseudocharacter
$\psi(x,\tilde\zeta_G)$ carries more information than $\psi(x,\zeta_G)$ for an isolated
point $x$ in $(X,\zeta_G)$. If $x$ is non-isolated, then $\psi(x,\tilde\zeta_G)=\psi(x,\zeta_G)$.

In the algebraic language, the pseudocharacter $\psi(x,\tilde \zeta_G)$ equals the least number
of inequalities of the form
$$
\mbox{$f(x)\ne g(x)$ \  or \ $f(x)\ne c$, \ where \ $f,g\in G$ and $c\in
X$,}
$$
in a system of inequalities whose unique solution is $x$. Note that the pseudocharacter of
the Zariski topology is tightly related to the notion of being ungebunden, which
was introduced for general algebraic systems by Podewski~\cite{Podewski} and corrected and
generalized by Hesse~\cite{Hesse}. Namely, an algebraic system  is said to be
\emph{$\kappa$-ungebunden} if the pseudocharacter of the naturally defined Zariski topology on
this system is at least $\kappa$ at each point. Podewski proved that any $|G|$-ungebunden
algebraic system $G$ admits $2^{2^{|G|}}$ Hausdorff topologies consistent with the
algebraic structure, and Hesse investigated $\kappa$-ungebunden systems and constructed examples
showing that Podewski's sufficient topologizability condition cannot be weakened in the sense
that, for any cardinals $\kappa$ and $\lambda$ such that $\cf(\lambda)>\kappa=\cf(\kappa)\ge
\aleph_0$, there exists a group $G$ with $|G|=\lambda$ which is $\kappa$-ungebunden (that is,
$\psi(\zeta_G) = \kappa$) and admits no Hausdorff group (and even semigroup) topologies.

Now, let us consider the Zariski $G$-topology for some particular $G$-acts.

\begin{example} Let $X$ be an infinite set endowed with the natural action of the group $G$ of all
bijective functions $f:X\to X$ that have finite support
$$
\supp(f)=\{x\in X:f(x)\ne x\}.
$$
It is easy to see that $\psi(x,\zeta_G)=1$ and $\psi(x,\tilde\zeta_G)=2$ for any point $x\in X$.
Consequently, the Zariski $G$-topology $\zeta_G$ on $X$ is discrete, and the $G$-act $X$ is not
$G$-topologizable.
\end{example}

Each semigroup $G$ can be considered as an $S$-act for many natural actions of various submonoids
$S$ of the monoid $G^G$. We define five such natural submonoids of $G^G$:
\begin{itemize}
\item $G_l$ is the smallest submonoid of $G^G$ containing all left shifts
$l_a:x\mapsto ax$ of $G$ for $a\in G$;
\item $G_r$ is the smallest submonoid of $G^G$ containing all right shifts
$r_a:x\mapsto xa$ of $G$ for $a\in G$;
\item $G_s$ is the smallest submonoid of $G^G$  containing
all two-sided shifts $s_{a,b}:x\mapsto axb$ of $G$ for $a,b\in G$;
\item $G_m$, where $m\in\IN$, is
the smallest submonoid of $G^G$ containing $G_s$ and the $m$th power map $x\mapsto x^m$;
\item $G^+_p$ is the smallest submonoid of $G^G$ containing $G_s$ and such that
the product $f\cdot g:x\mapsto f(x)\cdot g(x)$ of any two functions $f,g\in G_p^+$
belongs to $G_p^+$.
\end{itemize}
Clearly,
$$
G_l\cup G_r\subset G_s\subset G_m\subset G^+_p,
$$
and hence
$$
\zeta_{G_l}\cup\zeta_{G_r}\subset\zeta_{G_s}\subset\zeta_{G_m}\subset\zeta_{G^+_p}.
$$

For a group $G$, we augment the above list of submonoids of $G^G$ by
some other submonoids:
\begin{itemize}
\item $G_q$ is the subgroup of $G^G$ containing all
bijections of the form $f:x\mapsto ax^\e b$, where $a,b\in G$ and $\e\in\{1,-1\}$;
\item $G_m$, where $m\in\IZ$, is
the smallest submonoid of $G^G$ containing $G_s$ and the $m$th power map $x\mapsto x^m$;
\item $G_{[s]}$ is the smallest submonoid of $G^G$ containing the subgroup $G_s$
and the maps $\gamma_a:G\to G$,\break $\gamma_a:x\mapsto xax^{-1}$, for all $a\in G$;
\item $G_{[q]}$ is the smallest submonoid of $G^G$ containing the subgroup $G_q$
and the maps $\gamma_a:G\to G$,\break $\gamma_a:x\mapsto xax^{-1}$, for all $a\in G$;
\item $G_{p}$ is the smallest submonoid containing the subgroup $G_q$ and such that
the  product $f\cdot g:x\mapsto f(x)\cdot g(x)$ of any two functions $f,g\in G_p$ belongs
 to $G_{p}$.
\end{itemize}
We refer to functions from the family $G_{p}$ as {\em polynomials} on the group $G$.

$G_l$- and $G_r$-topologies on a group $G$ are known as left- and right-invariant topologies;
a group $G$ endowed with a $G_l$-topology ($G_r$-topology, $G_s$-topology, $G_q$-topology) is
said to be {\em left-topological} (respectively, {\em right-topological}, {\em semi-topological},
{\em quasi-topological\/}).
Following \cite{BGP}, we refer to a group $G$ endowed with a
$G_{[s]}$-topology ($G_{[q]}$-topology) as a [{\em semi\/}]-{\em topological} (respectively,
[{\em quasi\/}]-{\em topological}) group.

Now, consider the structure of the Zariski $S$-topologies on a group $G$ for
$S\in\{G_l, G_r, G_s, G_{[s]},G_q,G_{[q]},G^+_p,\allowbreak G_{p}\} \cup \{G_m:m\in\IZ\}$. It should be mentioned that $G_0=G_1=G_s$ and $G_{-1}=G_q$.

By the {\em cofinite topology} on a set $X$ we understand the topology
$$
\tau_1=\{\emptyset\}\cup\{X\setminus F: \mbox{$F$ is a  finite subset of $X$}\}.
$$

The following assertion follows easily from the definitions.

\begin{remark} For any group $G$, the Zariski topologies $\zeta_{G_l}$ and $\zeta_{G_r}$ on $G$
coincide with the cofinite topology on $G$. If $G$ is infinite, then the topologies $\zeta_{G_l}$
and $\zeta_{G_r}$ are not Hausdorff.
\end{remark}

\begin{remark}\label{r2.5}
For any infinite group $G$, the Zariski topologies $\zeta_{G_s}$ and
$\zeta_{G_q}$ are not discrete. This follows from a deep result of Zelenyuk \cite{Zel,
Zel07}, who proved that each infinite group $G$ admits a non-discrete Hausdorff topology with
continuous two-sided shifts and continuous inversion.
\end{remark}

\begin{remark} For a group $G$ endowed with the natural action of the monoid $G_{p}$ of all
polynomial functions on $G$, the Zariski $G_{p}$-topology $\zeta_{G_{p}}$ coincides with the usual
Zariski topology $\mathfrak Z_G$ on the group $G$.
In particular, for countable non-topologizable groups,
the Zariski $G_{p}$-topology $\zeta_{G_{p}}$ is
discrete, while for Hesse's example of an uncountable non-topologizable
group $G$ mentioned above (in which
the complement to the identity element is not algebraic),
the Zariski topology $\zeta_{G_{p}}$ is non-discrete.
\end{remark}

Thus, for infinite groups $G$, the Zariski $G_q$-topology $\zeta_{G_q}$ is always non-discrete,
while the topology $\zeta_{G_p}$ may be discrete (e.g., for countable non-topologizable
groups).

If a group $G$ is Abelian, then the Zariski topology $\zeta_{G_s}$ on $G$ coincides with the
topologies $\zeta_{G_l}$ and $\zeta_{G_r}$ and is therefore cofinite. However, for non-Abelian
groups $G$, the topology $\zeta_{G_s}$ may have rather unexpected properties.

\begin{example}[Dikranjan--Toller]\label{ex2.7}
Let $H$ be a finite discrete topological group with
trivial center (for example, let $H=\Sigma_3$ be the group of bijections of a 3-element set). For
any cardinal $\kappa$, the Zariski topologies $\zeta_{G_s}$ and $\zeta_{G_p}$ on the group
$G=H^\kappa$ coincide with the Tychonoff product topology $\tau$ on $G=H^\kappa$; hence these
topologies are compact and  Hausdorff and have pseudocharacter
$\psi(x,\tau)=\kappa<2^\kappa=|G_s|=|G_p|=|G|$ at each point $x\in G$.
\end{example}

\begin{proof} Observe that the Tychonoff product topology $\tau$ on $G=H^\kappa$ turns the group
$G$ into a compact topological group. Thus, each polynomial map on $G$ is continuous, and each set
$U\in\tilde\zeta_{G_p}$ is open in $G$ with this topology. Consequently,
$\zeta_{G_s}\subset\zeta_{G_p}\subset\tau$. The Tychonoff product  topology $\tau$ is generated by
the subbase consisting of the sets
$$
U_{\alpha,h}=\{x\in G:\pr_\alpha(x)=h\},
$$
where $\alpha\in\kappa$, $h\in H$, and
$\pr_\alpha:H^\kappa\to H$ denotes the $\alpha$th coordinate projection. To prove that
$\zeta_{G_s}=\zeta_{G_p}=\tau$, it suffices to check that each set $U_{\alpha,h}$ belongs to the
topology $\zeta_{G_s}$.

Consider the embedding $i_\alpha:H\to H^\kappa$ which takes each $x\in H$
to the point $i_\alpha(x)\in H^\kappa$ such that $\pr_\alpha\circ i_\alpha(x)=x$ and
$\pr_\beta\circ i_\alpha(x)=1_H$ for all $\beta\ne \alpha$.

Given a point $h\in H$, consider the finite set $A_h=\{(a,b)\in H\times H:ah\ne hb\}$ and observe
that $\{h\}=\bigcap_{(a,b)\in A_h}\{x\in H:x^{-1}ax\ne b\}$. Indeed, it follows from the
triviality of the center of $H$ that, for any $x\in H\setminus\{h\}$, there is an element $a\in H$
such that $(xh^{-1})a\ne a(xh^{-1})$ and, therefore,  $h^{-1}ah\ne x^{-1}ax$. For $b=x^{-1}ax$,
we have $h^{-1}ah\ne b$, whence $(a,b)\in A_h$.

For each pair $(a,b)\in A_h$, consider the left and right shifts $l_{a}:x\mapsto i_\alpha(a)\cdot
x$ and $r_b:x\mapsto x\cdot i_\alpha(b)$ of the group $G=H^\kappa$. These shifts generate the
subbase set
$$
U_{a,b}=\{x\in G: i_\alpha(a)\cdot x\ne x\cdot i_\alpha(b)\}=\{x\in G: l_a(x)\ne
r_b(x)\}\in\tilde\zeta_{G_s}.
$$
It remains to note that
$$
\pr_{\alpha}^{-1}(h)=\bigcap_{(a,b)\in A_h}U_{a,b}\in\zeta_{G_s};
$$
therefore, $\tau=\zeta_{G_p}=\zeta_{G_s}$. The equality
$\zeta_{G_s}=\zeta_{G_p}$ implies
$\zeta_{G_s}=\zeta_{G_{[s]}}=\zeta_{G_q}=\zeta_{G_{[q]}}=\zeta_{G_p^+}=\zeta_{G_p}$.
\end{proof}

Next, we consider the Zariski topologies on permutation groups.

\begin{remark}
Given a  set $X$ of cardinality $|X|\ge 3$, let $S(X)$ be the group of all bijective
transformations of $X$, and let $S_\w(X)$ be the normal subgroup of $S(X)$ consisting of all
bijective transformations $f:X\to X$ with finite support $\supp(f)=\{x\in X:f(x)\ne x\}$. According
to \cite{BGP}, for any group $G$ with $S_\w(X)\subset G\subset S(X)$ and any monoid
$S\in\{G_{[s]},G_{[q]},G_{p}\}$, the Zariski $S$-topology $\zeta_S$ on $G$ coincides with the
topology of pointwise convergence $\mathcal T_p$ and therefore is completely regular. If $X$ is
infinite, then the Zariski topologies $\zeta_{G_s}$ and $\zeta_{G_{[s]}}$ are distinct, as shown in
(the proof of) Lemma 2.9 in~\cite{BGP}.
\end{remark}

\begin{remark}
According to \cite[6.3]{BGP}, for any set $X$, the topology $\zeta_{G_s}$ on the permutation
group $G=S_\w(X)$ is $\sigma$-discrete (i.e., $G$ can be represented as a countable union
of discrete subspaces of $(G,\zeta_G)$\,). Consequently, for any submonoid $S\subset G^G$
containing $G_s$, the $S$-topology $\zeta_S$ on $G=S_\w(X)$ is $\sigma$-discrete.
Permutation groups $S_\w(X)$ belong to the class of perfectly supportable semigroups, introduced in
\cite{BG}. According to \cite[3.5]{BG}, for each perfectly supportable semigroup $G$, the
topological space $(G,\zeta_{G_s})$ is $\sigma$-discrete.
\end{remark}

By Zelenyuk's result \cite{Zel07}, mentioned in Remark~\ref{r2.5}, for every infinite group $G$ the Zariski topologies $\zeta_{G_0}=\zeta_{G_1}=\zeta_{G_s}$ and $\zeta_{G_{-1}}=\zeta_{G_q}$ are not discrete. On the other hand, we have:

\begin{proposition} For any non-zero number
$m\in\IZ\setminus\{-2^n,2^n:n\in\w\}$ there is a countable infinite group $G$
with discrete Zariski topology $\zeta_{G_m}$.
\end{proposition}

\begin{proof} By assumption, the number $m$ has a prime divisor $p\ge 3$.
Let us write $m$ as $m=l\cdot p^\alpha$, where $\alpha$ is a positive integer
and $l$ is an integer not divisible by $p$, and choose
a positive integer $\beta$ such that $p^{\alpha\beta}\ge 665$. According to \cite{Ol80} and
\cite{Ol89}, there is a countable infinite group $G$ containing a cyclic
subgroup $C$ of order $|C|=p^{\alpha\beta}$ such that
$$
G\setminus \{1_G\}=(C\setminus\{1_G\})
\cup \bigcup_{c\in C\setminus\{1_G\}}\{x\in G:x^{p^{\alpha\beta}}=c\}.
$$
Since $l^\beta$ is coprime to $p^{\alpha\beta}$, it follows that, given any element $c\in C$,
we have $c\ne 1_G$ if and only if $c^{l^\beta}\ne 1_G$. This implies
$$
G\setminus \{1_G\}=(C\setminus\{1_G\})
\cup\bigcup_{c\in C\setminus\{1_G\}}\{x\in G:(x^{p^{\alpha\beta}})^{l^\beta}=c\}.
$$
The continuity of the map $x\mapsto x^m$ (with respect to the topology $\zeta_{G_m}$)
implies the continuity of the map $f:G\to G$, $f:x\mapsto x^{m^\beta}=x^{p^{\alpha\beta} l^\beta}$.
Therefore, the set $G\setminus\{1_G\}=(C\setminus\{1_G\})\cup f^{-1}(C\setminus\{1_G\})$
is closed in the topology $\zeta_{G_m}$, and its complement $\{1_G\}$ is open,
which implies the discreteness of the topology $\zeta_{G_m}$.
\end{proof}

Thus, for every infinite group $G$, the Zariski topologies $\zeta_{G_m}$, $m\in\{-1,0,1\}$,
are not discrete, while for any odd $m\notin\{-1,1\}$, there is an infinite group $G$
with discrete Zariski topology $\zeta_{G_m}$.

\begin{problem}  Does there exist an infinite group $G$
with discrete Zariski topology $\zeta_{G_2}$? Equivalently, is there an infinite group $G$ admitting no non-discrete shift-invariant Hausdorff topology with continuous map $x\mapsto x^2$?
\end{problem}

By the result of Zelenyuk mentioned in Remark~\ref{r2.5}, each infinite group $G$ is
$G_s$-topologizable.  The following simple
example shows that this result cannot be generalized to semigroups.

\begin{example} For the monoid $G=(\IN,\min)$, the Zariski topology $\zeta_{G_s}$ (which coincides
with $\zeta_{G_l}$ and $\zeta_{G_r}$) is discrete. Indeed, for each $n\in\IN$, the singleton
$\{n\}$ belongs to the topology $\zeta_{G_s}$, because
$$
\{n\}=\{x\in \IN:\min\{x,n\}\ne
n\}\cap\bigcap_{k<n}\{x\in \IN:x\ne k\}.
$$
This implies that the monoid $G=(\IN,\min)$ is not $G_s$-topologizable.
\end{example}

\begin{example} For the monoid $G=(\IN,\max)$, the Zariski topology $\zeta_{G_s}$ (which coincides
with $\zeta_{G_l}$ and $\zeta_{G_r}$) is discrete. Indeed, for each $n\in\IN$, the singleton
$\{n\}$ belongs to the topology $\zeta_{G_s}$, because
$$
\{n\}=\{x\in \IN:\max\{x,n\}\ne
x\}\cap\bigcap_{k<n}\{x\in \IN:x\ne k\}.
$$
This implies that the monoid $G=(\IN,\max)$ is not $G_s$-topologizable.
\end{example}

\section{$G$-topologies on $G$-acts generated by discriminate filters}\label{s:spec}

In this section we describe and study $G$-topologies on $G$-acts generated by filters of a special
form.

A {\em filter} on a set $X$ is a family $\varphi$ of subsets of $X$ such that
\begin{itemize}
\item $\emptyset\notin\varphi$;
\item $A\cap B\in\varphi$ for any sets $A,B\in\varphi$;
\item $A\cup B\in\varphi$ for any sets $A\in\varphi$ and $B\subset X$.
\end{itemize}

By the {\em pseudocharacter $\psi(\varphi)$ of a filter} $\varphi$ we understand the smallest
cardinality $|\F|$ of a subfamily $\F\subset\varphi$ such that $\bigcap\F=\bigcap\varphi$. The {\em
character} $\chi(\varphi)$ of a filter $\varphi$ equals the smallest cardinality of a subfamily
$\F\subset\varphi$ such that each set $\Phi\in\varphi$ contains some set $F\in\F$. Note that
the {\em character} $\chi(x,\tau)$ of a topological space $(X,\tau)$  at a point $x$ can be defined
as the character $\chi(\tau_x)$ of the neighborhood filter $\tau_x=\{U\in\tau:x\in U\}$.

Given a filter $\varphi$ on $X$, consider the family
$$
\varphi^+=\{E\subset X:\forall F\in\varphi\;\;F\cap E\ne\emptyset\}
$$
equal to the union of all
filters on $X$ that contain $\varphi$. It is easy to check that, for each $A\subset  X$ with
$A\notin\varphi$, we have $X\setminus A\in\varphi^+$.

\smallskip

A filter $\varphi$ on a topological space $X$ is said to {\em converge to a point} $x_0$ if
each neighborhood $U\subset X$ of $x_0$ belongs to $\varphi$.

Now, suppose that $G$ is a monoid, $X$ is a $G$-act, and $\varphi$ is a filter on $X$ such
that $\bigcap\varphi=\{x_0\}$ for some point $x_0$. Then we can consider the largest $G$-topology
$\tau_\varphi$ on $X$ in which the filter $\varphi$ converges to $x_0$. This topology admits
the following simple description.

\begin{proposition}
The topology $\tau_\varphi$ consists of all sets $U\subset X$ such that, for any
$g\in G$ with $x_0\in g^{-1}(U)$, the preimage $g^{-1}(U)$ belongs to the filter
$\varphi$.
\end{proposition}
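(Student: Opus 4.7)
The natural plan is to take the set
$$\tau = \{U\subset X : \forall g\in G,\; x_0\in g^{-1}(U)\ \Rightarrow\ g^{-1}(U)\in\varphi\}$$
as the candidate for $\tau_\varphi$ and to verify four things: (a) $\tau$ is a topology on $X$; (b) $\tau$ is a $G$-topology; (c) $\varphi$ converges to $x_0$ in $\tau$; (d) any $G$-topology $\sigma$ on $X$ to which $\varphi$ converges satisfies $\sigma\subset\tau$. Together, (a)--(d) imply both the existence of the largest such $G$-topology and its coincidence with $\tau$.

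For (a), the empty set is in $\tau$ vacuously, and $X\in\tau$ since $g^{-1}(X)=X\in\varphi$ for every $g\in G$. If $U,V\in\tau$ and $x_0\in g^{-1}(U\cap V)=g^{-1}(U)\cap g^{-1}(V)$, both preimages lie in $\varphi$ and so does their intersection, hence $U\cap V\in\tau$. For a union $\bigcup_\alpha U_\alpha$ with $x_0\in g^{-1}(\bigcup_\alpha U_\alpha)$, choose $\alpha$ with $x_0\in g^{-1}(U_\alpha)$; then $g^{-1}(U_\alpha)\in\varphi$ and hence the superset $g^{-1}(\bigcup_\alpha U_\alpha)$ is in $\varphi$ by the upward closure of filters. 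For (b), given $h\in G$ and $U\in\tau$, I need $h^{-1}(U)\in\tau$: if $g\in G$ and $x_0\in g^{-1}(h^{-1}(U))=(hg)^{-1}(U)$, then since $hg\in G$ and $U\in\tau$, one has $(hg)^{-1}(U)\in\varphi$, i.e.\ $g^{-1}(h^{-1}(U))\in\varphi$. For (c), apply the defining property of $\tau$ with $g=1_G$: every $U\in\tau$ with $x_0\in U$ equals $1_G^{-1}(U)\in\varphi$.

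For the maximality clause (d), let $\sigma$ be any $G$-topology to which $\varphi$ converges, and pick $U\in\sigma$. For every $g\in G$ with $x_0\in g^{-1}(U)$, continuity of $g$ in $\sigma$ gives $g^{-1}(U)\in\sigma$, and since $g^{-1}(U)$ is a $\sigma$-neighborhood of $x_0$ and $\varphi\to x_0$ in $\sigma$, we get $g^{-1}(U)\in\varphi$. Hence $U\in\tau$, so $\sigma\subset\tau$.

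None of the steps is a real obstacle; the verification is essentially bookkeeping. The only item worth watching is the composition convention in (b), where the identity $g^{-1}\circ h^{-1}=(h\circ g)^{-1}$ must be used with the paper's convention $(hg)(x)=h(g(x))$, so that $hg\in G$ is the element of the monoid whose preimage of $U$ is taken. One could alternatively preface (a)--(c) by the abstract observation that suprema of $G$-topologies with $\varphi\to x_0$ are again $G$-topologies with $\varphi\to x_0$, which guarantees the existence of a largest such $G$-topology; but the direct construction above delivers both existence and the explicit description in one go.
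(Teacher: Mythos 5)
Your proof is correct and complete; the paper states this proposition without proof as a ``simple description,'' and your four-step verification (topology axioms, $G$-invariance via $g^{-1}(h^{-1}(U))=(hg)^{-1}(U)$, convergence via $g=1_G$, and maximality) is exactly the routine argument the authors leave to the reader. No issues.
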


Our strategy is to find some class of filters $\varphi$ on $X$ which generate $G$-topologies
$\tau_\varphi$ on $X$.

\begin{definition}\label{spec-seq}
Let $\kappa$ be a cardinal. We say that an injective transfinite sequence
$(x_\alpha)_{\alpha<\kappa}$ of points of a $G$-act $X$ is {\em discriminate} if there is a (not
necessarily bijective) enumeration $G=\{g_\alpha\}_{\alpha<\kappa}$ of the monoid $G$ such that,
for all ordinals $\alpha<\kappa$ and $\beta,\gamma,\delta<\alpha$, the following conditions hold:
\begin{enumerate}
\item
if $g_\beta(x_0)\ne g_\gamma(x_0)$, then $g_\beta(x_\alpha)\ne g_\gamma(x_\alpha)$;
\item if $g_\beta(x_0)\ne g_\gamma(x_\delta)$, then $g_\beta(x_\alpha)\ne g_\gamma(x_\delta)$.
\end{enumerate}
\end{definition}

\begin{definition}\label{spec-fil}
A filter $\varphi$ on a $G$-act $X$ is said to be {\em discriminate} if
there is a cardinal $\kappa$ and a discriminate sequence $(x_\alpha)_{\alpha<\kappa}$ in
$X$ such that $\bigcap\varphi=\{x_0\}$ and $\{x_0\}\cup\{x_\beta:\beta>\alpha\}\in\varphi$ for all
ordinals $\alpha<\kappa$. In this case, the set $X_0=\{x_\alpha\}_{\alpha<\kappa}$ is called the
{\em discriminate support} of $\varphi$.
\end{definition}

For a discriminate filter $\varphi$ on a $G$-act $X$, the $G$-topology $\tau_\varphi$ has many
nice properties.

\begin{theorem}\label{t:spec}
For any discriminate filter $\varphi$ on a $G$-act $X$ with discriminate
support $X_0$ and intersection $\bigcap\varphi=\{x_0\}$, the $G$-topology $\tau_\varphi$ has the
following properties:
\begin{enumerate}
\item
the topological space $(X,\tau_\varphi)$ is hereditarily normal;
\item
for any $F\in\varphi$, the set $G(F)=\{g(x):g\in G,\;x\in F\}$ is  open and closed in
$(X,\tau_\varphi)$ and $X\setminus G(F)$ is discrete in $(X,\tau_\varphi)$\textup;
\item
$\{F\cap X_0:F\in\varphi\}=\{U\cap X_0:x_0\in U\in\tau_\varphi\}$\textup;
\item
$\psi(x_0,\tau_\varphi)=\psi(\varphi)$ and $\chi(x_0,\tau_\varphi)\ge\chi(\varphi)$.
\end{enumerate}
\end{theorem}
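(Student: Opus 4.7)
The plan is to establish items (2), (3), (4), (1) in that order: (2) gives the clopen decomposition underlying everything, (3) pins down the neighborhood trace of $x_0$ on $X_0$, (4) is a cardinal consequence of (3), and (1) combines the preceding structural results with elementary separation manipulations.

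For (2), I use the characterization from the preceding proposition: $U\in\tau_\varphi$ iff $g^{-1}(U)\in\varphi$ whenever $g(x_0)\in U$. Fixing $F\in\varphi$, associativity of the action gives $g(G(F))\subset G(F)$ for every $g\in G$, hence $g^{-1}(G(F))\supset G(F)\supset F\in\varphi$; thus $G(F)\in\tau_\varphi$. Since $\cap\varphi=\{x_0\}$ forces $x_0\in F$, every $g(x_0)$ lies in $g(F)\subset G(F)$, so no $y\in X\setminus G(F)$ is of the form $g(x_0)$; the openness condition for $\{y\}$ is then vacuous, and $\{y\}\in\tau_\varphi$. Consequently $X\setminus G(F)$ is open-and-discrete and $G(F)$ is closed.

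For (3), the inclusion $\supset$ is immediate via $U=1_G^{-1}(U)\in\varphi$. For the reverse, given $F\in\varphi$ I would construct an open neighborhood $U_F$ of $x_0$ satisfying $U_F\cap X_0=F\cap X_0$, starting from the candidate $(F\cap X_0)\cup(X\setminus X_0)$ and carefully carving away the problematic points outside $X_0$ that spoil openness. The verification of openness is the heart of the matter: for each $g=g_\gamma$ with $g(x_0)\in U_F$, a tail $T_\eta=\{x_0\}\cup\{x_\beta:\beta>\eta\}$ must sit inside $g^{-1}(U_F)$; the index $\eta$ is chosen large enough that conditions (1)--(2) of the special sequence guarantee $g(x_\beta)\in U_F$ for all $\beta>\eta$, the index bound $\eta$ dominating $\gamma$, the index of $1_G$, and the indices of the generators $h\in G$ and points $x_\delta\in X_0\setminus F$ whose orbits could intrude on $U_F$. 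Item (4) then follows by cardinal counting: a family $\mathcal{F}\subset\varphi$ with $\cap\mathcal{F}=\{x_0\}$ transfers via $F\mapsto U_F$, together with a small shrinking inside the discrete complement supplied by (2), to an open family of neighborhoods of $x_0$ of the same cardinality with intersection $\{x_0\}$; the reverse inequality is trivial since every open neighborhood of $x_0$ already lies in $\varphi$. This yields $\psi(x_0,\tau_\varphi)=\psi(\varphi)$, and the same correspondence applied to bases yields $\chi(x_0,\tau_\varphi)\geq\chi(\varphi)$.

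For (1), the $T_1$ axiom is proved point-by-point: $\{y\}$ is closed because $\{x:g(x)\neq y\}$ contains a tail $T_\eta\in\varphi$ whenever $g(x_0)\neq y$ --- by applying condition (2) when $y\in G(X_0)$, and by the observation $g^{-1}(y)\cap X_0=\emptyset$ (together with $X_0\in\varphi$) when $y\notin G(X_0)$. For normality, take disjoint closed $A,B$ and assume $x_0\notin B$ by symmetry; then $X\setminus B$ is an open neighborhood of $x_0$ containing some $U_F$ from (3). Combining the clopen decomposition $X=G(F)\sqcup(X\setminus G(F))$ from (2) with the discreteness of the complement, assemble a clopen set $V$ containing $A$ and disjoint from $B$: on $G(F)$ separate $A\cap G(F)$ from $B\cap G(F)$ using $U_F$, while $A\cap(X\setminus G(F))$ is automatically open and can be absorbed into $V$ trivially. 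The complement $X\setminus V$ then supplies a disjoint open neighborhood of $B$.

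The main obstacle is step (3), where verifying that the chosen $U_F$ really is $\tau_\varphi$-open absorbs the technical heart of the proof. The special-sequence definition is tailored precisely for this verification: condition (2) ensures that $g(x_\beta)$ eventually avoids any prescribed initial-segment collection of forbidden points $h(x_\delta)$, while condition (1) rules out collisions among distinct generators. Once (3) is in hand, (4) is a cardinal-counting consequence and (1) reduces to the clopen-assembly sketched above.
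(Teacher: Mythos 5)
Your items (2), (3) and the $T_1$ axiom are essentially correct and follow the paper's route (in fact the paper shows that $(F\cap X_0)\cup(X\setminus X_0)$ is already open, so no ``carving'' is needed for (3)). The genuine gaps are in item (4) and in the normality half of item (1), which between them constitute the technical core of the paper's proof.

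For (4), the transfer $F\mapsto U_F=(F\cap X_0)\cup(X\setminus X_0)$ cannot be repaired by ``a small shrinking inside the discrete complement supplied by (2)''. Since $x_0\in F$ for every $F\in\varphi$, each point $g(x_0)$ lies in $G(F)$ for \emph{every} $F$, hence never in the discrete part $X\setminus G(F)$; and such a point is non-isolated whenever $g^{-1}(g(x_0))\notin\varphi$ (e.g.\ for injective $g$). If $g(x_0)\notin X_0$, it lies in every $U_F$ and in every $U_F\cap G(F')$, so $\bigcap_{F\in\F}U_F\supset\{x_0\}\cup\bigl(G(\{x_0\})\setminus X_0\bigr)$, which is strictly larger than $\{x_0\}$ in general. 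Removing these points one at a time costs up to $|G|\le\kappa$ additional open sets, whereas $\psi(\varphi)$ can be as small as $\cf(\kappa)$ or $\aleph_0$, so the cardinal count is destroyed. The paper instead builds, for each $F\in\F$, a neighborhood $U^F=\bigcup_{n\in\w}U^F_n$ by the recursion $U^F_0=\{x_0\}$, $U^F_{n+1}=U^F_n\cup\{g_\alpha(x_\beta):\alpha<\beta,\ x_\beta\in F,\ g_\alpha(x_0)\in U^F_n\}$; this set is open yet deliberately omits $g(x_0)$ for most $g$, and condition (2) of the definition of a special sequence then forces $\bigcap_{F\in\F}U^F=\{x_0\}$. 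Some such minimal saturation is unavoidable here.

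For normality, the claim that $X\setminus B$ contains some $U_F$ fails whenever $B$ meets $X\setminus X_0$ (which it may), and ``on $G(F)$ separate $A\cap G(F)$ from $B\cap G(F)$ using $U_F$'' is exactly the hard step, for which you give no mechanism: the non-isolated points are not confined to a neighborhood of $x_0$, since every $g(x_0)$ with $g^{-1}(g(x_0))\notin\varphi$ is non-isolated, and both $A$ and $B$ may contain many such points inside $G(F)$. The paper separates $A_0$ and $B_0$ by the simultaneous recursions $A_{n+1}=A_n\cup\{g_\alpha(x_\gamma):\alpha<\gamma,\ g_\alpha(x_0)\in A_n,\ g_\alpha(x_\gamma)\notin B_n\}$ and symmetrically for $B_{n+1}$, and uses conditions (1) and (2) of the special sequence to show that $A_\w=\bigcup_nA_n$ and $B_\w=\bigcup_nB_n$ are disjoint open neighborhoods. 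Nothing in your clopen-assembly sketch substitutes for this argument, so the proposal as it stands does not prove (1) or (4).
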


\begin{proof} By definition, the discriminate support $X_0$ of $\varphi$ admits an enumeration
$X_0=\{x_\alpha\}_{\alpha<\kappa}$ such that conditions (1) and (2) in
Definition~\ref{spec-seq} hold for some enumeration $G=\{g_\alpha\}_{\alpha<\kappa}$ of the monoid
$G$. If $\kappa$ is finite, then $\varphi$ consists of all subsets of $X$ containing $x_0$, so that
the topology $\tau_\varphi$ is discrete and, therefore, automatically possesses the required
properties. Thus, hereafter, we assume $\kappa$ to be infinite.

For every ordinal $\alpha<\kappa$,
consider the set $X_{>\alpha}=\{x_\beta:\alpha<\beta<\kappa\}$ and observe that $\{x_0\}\cup
X_{>\alpha}\in\varphi$ (by Definition~\ref{spec-fil}). Now, we shall prove the required properties
of the $G$-topology $\tau_\varphi$ as separate claims.

\begin{claim}
The topology $\tau_\varphi$ satisfies the separation axiom $T_1$.
\end{claim}

\begin{proof}
Given any point $x\in X$, we must show that $X\setminus\{x\}\in\tau_\varphi$.
Since the discriminate filter $\varphi$ contains all sets $\{x_0\}\cup X_{>\alpha}$ for
$\alpha\in \kappa$,
it suffices, given any map $g\in G$  with $g(x_0)\in X\setminus\{x\}$, to find
$\alpha<\kappa$ such that $g(X_{>\alpha})\subset X\setminus\{x\}$. If $x\notin G(X_0)$, then
$g(X_{>0})\subset G(X_0) \subset X\setminus\{x\}$, and we are done.

Suppose that $x\in G(X_0)$. Let us find ordinals $\gamma,\delta<\kappa$ such that
$x=g_\gamma(x_\delta)$ and, in addition, an ordinal $\beta<\kappa$ for which
$g_\beta=g$. Since $g_\beta(x_0)=g(x_0)\ne x=g_\gamma(x_\delta)$, it follows from condition (2)
in Definition~\ref{spec-seq} that $g(x_\alpha)=g_\beta(x_\alpha)\ne
g_\gamma(x_\delta)=x$ for all $\alpha>\max\{\beta,\gamma,\delta\}$. Consequently, for the ordinal
$\alpha=\max\{\beta,\gamma,\delta\}$, we have the required inclusion $g(X_{>\alpha})\subset
X\setminus\{x\}$.
\end{proof}

\begin{claim}
The topology $\tau_\varphi$ is hereditarily normal.
\end{claim}

\begin{proof} To prove the hereditary normality of the topology $\tau_\varphi$,
take any subspace $Y$ of the topological space $(X,\tau_\varphi)$ and choose two closed disjoint sets $A,B$ in $Y$. Let $A_0=\bar A\setminus \bar B$ and $B_0=\bar B\setminus \bar A$, where $\bar A$, $\bar B$ are the closures of the sets $A,B$ in $(X,\tau_\varphi)$. The equalities $A=\bar A\cap Y$ and $B=\bar B\cap Y$ imply that $A\subset A_0$ and $B\subset B_0$.

 Consider the sequences of sets $(A_n)_{n\in\w}$
and $(B_n)_{n\in\w}$ defined by recursion as
$$
A_{n+1}=A_n\cup\{g_\alpha(x_\gamma):\alpha<\gamma<\kappa,\;\;g_\alpha(x_0)\in A_n,\;
g_\alpha(x_\gamma)\notin B_n\cup\bar B\}
$$
and
$$
B_{n+1}=B_n\cup\{g_\beta(x_\delta):\beta<\delta<\kappa,\;\;g_\beta(x_0)\in
B_n,\;g_\beta(x_\delta)\notin A_n\cup\bar A\}.
$$

We claim that the sets  $A_\w=\bigcup_{n\in\w}A_n$ and $B_\w=\bigcup_{n\in\w}B_n$
are open disjoint neighborhoods of $A_0$ and $B_0$ in $(X,\tau_\varphi)$. First,
we check that these sets are disjoint.
Assuming the opposite, we can find numbers $n,m\in\w$ such that $A_{n+1}\cap B_{m+1}\ne\emptyset$
but $A_n\cap B_{m+1}=\emptyset=A_{n+1}\cap B_m$. Choose any point $c\in A_{n+1}\cap B_{m+1}$. By
the definition of the sets $A_{n+1}$ and $B_{m+1}$, the point $c$ is of the form
$g_\alpha(x_\gamma)=c=g_\beta(x_\delta)$ for some ordinals $\alpha<\gamma<\kappa$ and
$\beta<\delta<\kappa$ such that $g_\alpha(x_0)\in A_n$ and $g_\beta(x_0)\in B_m$. It follows from
$A_n\cap B_m=\emptyset$ that $g_\alpha(x_0)\ne g_\beta(x_0)$.
Condition (1) in Definition~\ref{spec-seq} guarantees that $\gamma\ne\delta$. Without
loss of generality, we can assume that $\delta>\gamma$.
Since $g_\beta(x_0)\ne g_\alpha(x_\gamma)$,
it follows from condition (2) in Definition~\ref{spec-seq} that
$g_\beta(x_\delta)\ne g_\alpha(x_\gamma)$; this contradiction shows that
$A_\w\cap B_\w=\emptyset$.

Now, let us show that the set $A_\w$ is open in $(X,\tau_\varphi)$. Given an ordinal
$\alpha<\kappa$ for which $g_\alpha(x_0)\in A_\w$, we must find a set $F\in \varphi$ such that
$g_\alpha(F)\subset A_\w$. Let $n\in\w$ be the smallest number for which
$g_\alpha(x_0)\in A_n$. We claim that the set $F=\{x_0\}\cup\{x\in X_{>\alpha}:g_\alpha(x)\notin
B_n\cup\bar B\}$ belongs to the filter $\varphi$. Assuming that $F\notin \varphi$, we conclude
that the set $X_{>\alpha}\setminus F$ belongs to the family $\varphi^+$, and the set
$E_k=\{x\in X_{>\alpha}:g_\alpha(x)\in B_k\cup\bar B\}$ with $k=n$  belongs to $\varphi^+$ as well.

Let $k\le n$ be the smallest number for which $E_k\in\varphi^+$.
We claim that $k>0$. Indeed, since $\bar B$ is a closed subset in $(X,\tau_\varphi)$, its complement
$X\setminus \bar B$ is an open neighborhood of the point $g_\alpha(x_0)\in A_n$. By the
definition of the topology $\tau_\varphi$, there is a set $F_0\in \varphi$ such that
$g_\alpha(F_0)\subset X\setminus \bar B$. Since $F_0$ intersects $E_k$ and is disjoint with
$E_0=\{x\in X_{>\alpha}:g_\alpha(x)\in \bar B\}$, we conclude that $k>0$.

Since $\varphi^+\not\ni E_{k-1}\subset E_k\in \varphi^+$, the set $E_k\setminus E_{k-1}$ is
non-empty and, hence, contains some point $x_\gamma$ with $\gamma>\alpha$. It follows that
$g_\alpha(x_\gamma)\in B_k\setminus B_{k-1}$; therefore,
$g_\alpha(x_\gamma)=g_\beta(x_\delta)$ for some ordinals $\beta<\delta<\kappa$ such that
$g_\beta(x_0)\in B_{k-1}$.

By condition (1) in Definition~\ref{spec-seq}, $\delta\ne\gamma$ (because
$g_\alpha(x_\gamma)=g_\beta(x_\delta)$ and  $g_\alpha(x_0)\ne g_\beta(x_0)$). If $\delta>\gamma$,
then the equality $g_\alpha(x_\gamma)=g_\beta(x_\delta)$ is excluded by condition
(2) in Definition~\ref{spec-seq}, since
$B_{k-1}\not\ni g_\alpha(x_\gamma)\ne g_\beta(x_0)\in
B_{k-1}$. If $\gamma>\delta$, then the equality $g_\alpha(x_\gamma)=g_\beta(x_\delta)$ is also
excluded by (2), because $A_n\ni g_\alpha(x_0)\ne
g_\beta(x_\delta)=g_\alpha(x_\gamma)\in B_k$. This contradiction shows that $F\in
\varphi$ and  $g_\alpha(F)\subset A_{n+1}\subset A_\w$, which means  the openness of
$A_\w$.

Similarly, the set $B_\w$ is open in $(X,\tau_\varphi)$.
Thus, $A_\w\cap Y$ and $B_\w\cap Y$ are disjoint open neighborhoods
of the sets $A=A_0\cap Y$ and $B=B_0\cap Y$ in $Y$,
which proves the normality of the topological $T_1$-space $Y$ and hereditary
normality of $(X,\tau_\varphi)$.
\end{proof}

%Let $\psi(p)=\min\{|q|:q\subset p,\;\cap q=\emptyset\}$
%be the {\em pseudocharacter} of the filter
%$p$. It is clear that $\aleph_0\le\psi(p)\le\cf(\kappa)$.

The definition of the topology $\tau_\varphi$ implies that, for every $F\in\varphi$, the set
$G(F)=\{g(x):g\in G,\;x\in F\}$ is open and closed in $(X,\tau_\varphi)$ and $X\setminus
G(F)$ is discrete in $(X,\tau_\varphi)$.

\begin{claim}\label{cl:eq}
$\{F\cap X_0:F\in\varphi\}=\{U\cap X_0:x_0\in U\in\tau_\varphi\}$ and, therefore,
$\chi(\varphi)\le\chi(x_0,\tau_\varphi)$.
\end{claim}

\begin{proof} By the definition of the topology $\tau_\varphi$, we have
$\{U\in\tau_\varphi:x_0\in U\}\subset\varphi$; hence $\{U\cap X_0:x_0\in
U\in\tau_\varphi\}\subset\{F\cap X_0:F\in\varphi\}=\{F\in\varphi:F\subset X_0\}$.

To prove the reverse inclusion, fix any subset $F\in\varphi$ with $F\subset X_0$ and consider the
set $U=F\cup (X\setminus X_0)$. We claim that $U\in\tau_\varphi$.
To show this, given any ordinal $\alpha<\kappa$
for which $g_\alpha(x_0)\in U$, we must find a set $E\in\varphi$ such that $g_\alpha(E)\subset
U$. Take $\beta<\kappa$ for which $g_\beta=\id_X$ and consider the set
$$
E=\{x_0\}\cup\{x_\gamma\in F:\max\{\alpha,\beta\}<\gamma<\kappa\}\in\varphi.
$$
We have $g_\alpha(E)\subset U$. Indeed, assuming the converse, we can find an ordinal
$\gamma>\max\{\alpha,\beta\}$ such that $x_\gamma\in F$ and $g_\alpha(x_\gamma)\in X_0\setminus F$,
which implies $g_\alpha(x_\gamma)=x_\delta=\id_X(x_\delta)=g_\beta(x_\delta)$ for some ordinal
$\delta<\kappa$. Since $x_\gamma\in F$ and $x_\delta\notin F$, it follows that the ordinals
$\gamma$ and $\delta$ are distinct.

If $\gamma<\delta$, then the inequality $g_\beta(x_0)=x_0\ne x_\delta=g_\alpha(x_\gamma)$ and
condition (2) in Definition~\ref{spec-seq} imply $g_\beta(x_\delta)\ne
g_\alpha(x_\gamma)$, which is a contradiction.

If $\gamma>\delta$, then the inequality $g_\alpha(x_0)\ne x_\delta=g_\beta(x_\delta)$ and
condition (2) in Definition~\ref{spec-seq} imply $g_\alpha(x_\gamma)\ne
g_\beta(x_\delta)=x_\delta$, which again leads to  a contradiction.
\end{proof}

\begin{claim}\label{cl:l1}
The topology $\tau_\varphi$ has pseudocharacter $\psi(x_0,\tau_\varphi)=\psi(\varphi)$
at the point $x_0$.
\end{claim}

\begin{proof} The inequality $\psi(\varphi)\le\psi(x_0,\tau_\varphi)$ follows from
Claim~\ref{cl:eq}. To show that $\psi(x_0,\tau_\varphi)\le\psi(\varphi)$, fix a family
$\F\subset \varphi$ such that $|\F|=\psi(\varphi)$ and $\bigcap \F=\{x_0\}$. For every $F\in \F$,
we take the open neighborhood $U^F\in\tau_\varphi$ of $x_0$ being the union
$U^F=\bigcup_{n\in\w}U^F_n$ of the sequence of sets $(U^F_n)_{n\in\w}$
defined by the recursion as
$U^F_0=\{x_0\}$ and
$$
U^F_{n+1}=U^F_{n}\cup
\{g_\alpha(x_\beta):\alpha<\beta<\kappa,\;x_\beta\in F,\;\;g_\alpha(x_0)\in U^F_n\}\mbox{ \ for
every $n\in\w$}.
$$
The definition of the topology $\tau_\varphi$ implies that
$U^F=\bigcup_{n\in\w}U^F_n$ is an open neighborhood of $x_0$ in $X$.

Let us show that $\bigcap_{F\in \F}U^F=\{x_0\}$. Assume that, on the contrary,
this intersection contains a point $x$ distinct from $x_0$. For every $F\in \F$, find the smallest
number $n_F\in\w$ such that $x\in U^F_{n_F}$. Since $U^F_0=\{x_0\}\ne\{x\}$, we conclude
that $n_F>0$; hence $x\notin U^F_{n_F-1}$.
 By the definition of the set $U^F_{n_F}$, there are ordinals $\alpha_F<\beta_F<\kappa$ such that
$x_{\beta_F}\in F$ and  $x=g_{\alpha_F}(x_{\beta_F})\ne g_{\alpha_F}(x_0)\in U^F_{n_F-1}$.

We claim that there are two sets $F,E\in\varphi$ with $\beta_F\ne\beta_E$.
Fix any set $F\in \F$. Since $x_{\beta_F}\in F$ and $x_{\beta_F}\notin\{x_0\}= \bigcap \F$, there is a
set $E\in \F$ such that $x_{\beta_F}\notin E$. Then $\beta_F\ne \beta_E$. So, $F,E\in\varphi$ are two sets with $\beta_F\ne\beta_E$. We lose no generality assuming that $\beta_F<\beta_E$. Since $\beta_E>\max\{\alpha_E,\beta_F,\alpha_F\}$ and
$g_{\alpha_E}(x_0)\ne x=g_{\alpha_F}(x_{\beta_F})$, the condition (2) of Definition~\ref{spec-seq}
guarantees that $x=g_{\alpha_E}(x_{\beta_E})\ne g_{\alpha_F}(x_{\beta_F})=x$, which is the desired
contradiction that proves the equality $\bigcap_{F\in \F}U^F=\{x_0\}$ and the upper bound
$\psi(x_0,\tau_\varphi)\le\psi(\varphi)$.
\end{proof}
\end{proof}

\section{Zariski $G$-topology and the existence of discriminate filters}\label{s:existence}

In light of Theorem~\ref{t:spec}, it is of interest to describe $G$-acts
possessing discriminate sequences and discriminate filters.

\begin{proposition}\label{zar-spec} Suppose that $X$ is a $G$-act over a monoid $G$, $x_0\in X$,
and $\kappa$ is an infinite cardinal.
\begin{enumerate}
\item
If $|G|\le\kappa\le\psi(x_0,\zeta_G)$, then the $G$-act $X$ contains a discriminate sequence
$(x_\alpha)_{\alpha<\kappa}$.
\item
If the $G$-act $X$ contains a discriminate sequence
$(x_\alpha)_{\alpha<\kappa}$, then $|G|\le\kappa$ and $\cf(\kappa)\le\psi(x_0,\zeta_G)$.
\end{enumerate}
\end{proposition}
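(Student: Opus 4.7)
The plan is to prove both implications by translating Definition~\ref{spec-seq} into membership in subbasic sets of the Zariski $G$-topology and then applying the definition of pseudocharacter. Throughout I treat $\kappa$ as an infinite cardinal, consistent with the standing hypothesis that $\lambda$ be infinite.

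For part (1), I fix an enumeration $G=\{g_\alpha\}_{\alpha<\kappa}$ (with repetitions allowed if $|G|<\kappa$) and construct $(x_\alpha)_{\alpha<\kappa}$ by transfinite recursion from the given point $x_0$. At stage $\alpha>0$, the conditions (1) and (2) of Definition~\ref{spec-seq}, together with injectivity $x_\alpha\ne x_\beta$, translate into the demand that $x_\alpha\in\bigcap\mathcal{U}_\alpha$ for a family $\mathcal{U}_\alpha\subset\tilde\zeta_G(x_0)$ consisting of the subbasic sets $\{x:g_\beta(x)\ne g_\gamma(x)\}$ (for $\beta,\gamma<\alpha$ with $g_\beta(x_0)\ne g_\gamma(x_0)$), the sets $\{x:g_\beta(x)\ne g_\gamma(x_\delta)\}$ (for $\beta,\gamma,\delta<\alpha$ with $g_\beta(x_0)\ne g_\gamma(x_\delta)$), and the sets $\{x:1_G(x)\ne x_\beta\}$ (for $0<\beta<\alpha$, which are valid subbasic neighborhoods of $x_0$ precisely because $x_\beta\ne x_0$ by construction). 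Each set contains $x_0$, and $|\mathcal{U}_\alpha|\le\max(|\alpha|,\aleph_0)<\kappa\le\psi(x_0,\zeta_G)$, so the definition of $\psi$ forces $\bigcap\mathcal{U}_\alpha\ne\{x_0\}$. Any choice $x_\alpha\in\bigcap\mathcal{U}_\alpha\setminus\{x_0\}$ then automatically inherits injectivity and both conditions of Definition~\ref{spec-seq}.

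For part (2), the inequality $|G|\le\kappa$ is immediate from the enumeration guaranteed by the special sequence. To show $\cf(\kappa)\le\psi(x_0,\zeta_G)$, I verify that every $\mathcal{U}\subset\tilde\zeta_G(x_0)$ with $|\mathcal{U}|<\cf(\kappa)$ satisfies $\bigcap\mathcal{U}\ne\{x_0\}$. To each $U\in\mathcal{U}$ I associate a finite index set $S(U)\subset\kappa$: for $U=\{x:g_\beta(x)\ne g_\gamma(x)\}$ take $S(U)=\{\beta,\gamma\}$; for $U=\{x:g_\beta(x)\ne c\}$ with $c\in G(X_0)$ fix some representation $c=g_\gamma(x_\delta)$ and take $S(U)=\{\beta,\gamma,\delta\}$; for $U=\{x:g_\beta(x)\ne c\}$ with $c\notin G(X_0)$ take $S(U)=\{\beta\}$. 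Since $|\bigcup_{U\in\mathcal{U}}S(U)|<\cf(\kappa)$, this union is bounded in $\kappa$ by some ordinal $\alpha<\kappa$, and I claim $x_\alpha\in\bigcap\mathcal{U}$: condition (1) of Definition~\ref{spec-seq} handles the type-(1) sets, condition (2) handles the type-(2) sets with $c\in G(X_0)$, and the elementary observation $g_\beta(x_\alpha)\in G(X_0)\not\ni c$ handles the type-(2) sets with $c\notin G(X_0)$. Injectivity gives $x_\alpha\ne x_0$, the required contradiction with $\bigcap\mathcal{U}=\{x_0\}$.

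The main technical subtlety I anticipate is the dichotomy on the constant $c$ in a type-(2) subbasic set in part (2): $c\in X$ need not lie in the orbit $G(X_0)$, so the special-sequence conditions cannot be invoked directly. The resolution is exactly the two cases above: when $c\in G(X_0)$ a representation $c=g_\gamma(x_\delta)$ must be fixed in advance (contributing two extra ordinals per $U$) so that condition (2) of Definition~\ref{spec-seq} applies, whereas for $c\notin G(X_0)$ the conclusion follows gratis from $g_\beta(x_\alpha)\in G(X_0)$. Once this split is in place, the cardinality arithmetic telescopes cleanly, using the fact that fewer than $\cf(\kappa)$ ordinals below $\kappa$ have supremum strictly less than $\kappa$.
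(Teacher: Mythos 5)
Your proof is correct and follows essentially the same route as the paper's: a transfinite recursion through subbasic Zariski neighborhoods of $x_0$ for part (1), and for part (2) the bounded-index-set/cofinality argument with exactly the same dichotomy on whether $c\in G(X_0)$. The only cosmetic difference is that in (1) you secure injectivity by adjoining the subbasic sets $\{x:1_G(x)\ne x_\beta\}$ to the system, whereas the paper instead notes that the intersection of fewer than $\kappa$ subbasic neighborhoods of $x_0$ must have cardinality at least $\kappa$ --- two equivalent ways of making the same point.
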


\begin{proof} 1. Suppose that $|G|\le\kappa\le\psi(x_0,\zeta_G)$ and
$G=\{g_\alpha:\alpha<\kappa\}$ is an enumeration of the monoid $G$ such that $g_0=1_G$.
Let us construct an injective transfinite sequence $(x_\alpha)_{\alpha<\kappa}$ of
points of the set $X$ such that, for any $\alpha<\kappa$ and $\beta,\gamma,\delta<\alpha$,
\begin{enumerate}
\item[(a)] if $g_\beta(x_0)\ne g_\gamma(x_0)$, then $g_\beta(x_\alpha)\ne
g_\gamma(x_\alpha)$;
\item[(b)] if $g_\beta(x_0)\ne g_\gamma(x_\delta)$, then $g_\beta(x_\alpha)\ne
g_\gamma(x_\delta)$.
\end{enumerate}
We construct this sequence by transfinite induction.

Assume that, for some ordinal $\alpha<\kappa$, the points $x_\beta$ with $\beta<\alpha$ have
already been constructed. For any ordinals $\beta,\gamma,\delta<\alpha$, consider the open
neighborhoods
$$
U_{\beta,\gamma}=\{x\in X:g_\beta(x_0)\ne g_\gamma(x_0)\;\Ra g_\beta(x)\ne
g_\gamma(x)\}
$$
and
$$
V_{\beta,\gamma,\delta}=\{x\in X:g_\beta(x_0)\ne g_\gamma(x_\delta)\;\Ra\;
g_\beta(x)\ne g_\gamma(x_\delta)\}$$
of $x_0$ in the Zariski $G$-topology $\zeta_G$. Since
$\psi(x_0,\zeta_G)\ge\kappa$, the intersection
$\bigcap_{\beta,\gamma,\delta<\alpha}U_{\beta,\gamma}\cap V_{\beta,\gamma,\delta}$ has cardinality
$\ge\kappa$ and hence  contains some point $x_\alpha\in X\setminus\{x_\beta:\beta<\alpha\}$.
Clearly,  this point $x_\alpha$ satisfies conditions (a) and (b).

\smallskip

2. Now, suppose that $X$ contains a discriminate sequence $X_0=\{x_\alpha\}_{\alpha<\kappa}$
for some infinite cardinal $\kappa$. Let $G=\{g_\alpha\}_{\alpha<\kappa}$ be an enumeration of the
monoid $G$ such that conditions (1) and (2) in Definition~\ref{spec-seq} are satisfied.
Then $|G|\le\kappa$.

Let us prove that $\psi(x_0,\zeta_G)\ge\cf(\kappa)$. Assume that, on the
contrary, there exists a family $\U\subset\tilde\zeta_G$ such that
$\bigcap\U=\{x_0\}$ and $|\U|<\cf(\kappa)$. For each set $U\in\U\subset\tilde\zeta_G$, choose
ordinals $\alpha_U,\beta_U<\kappa$ and a point $c_U\in X$ so that $U$ is equal to either
$\{x\in X:g_{\alpha_U}(x)\ne g_{\beta_U}(x)\}$ or $\{x\in X:g_{\alpha_U}(x)\ne c_{U}\}$. If
$c_U\in G(X_0)$, then we also choose ordinals $\gamma_U,\delta_U<\kappa$ such that
$c_U=g_{\gamma_U}(x_{\delta_U})$; otherwise, we set $\gamma_U=\delta_U=0$.
Since the set $A_U=\{\alpha_U,\beta_U,\gamma_U,\delta_U:U\in\U\}$ has cardinality $<\cf(\kappa)$,
there is an ordinal $\alpha<\kappa$ for which $\alpha>\sup A_U$. We claim that
$x_\alpha\in\bigcap\U$. Indeed, take any set $U\in\U$. If $U=\{x\in
X:g_{\alpha_U}(x)\ne g_{\beta_U}(x)\}$, then,
by condition~(1) in Definition~\ref{spec-seq}, we have $x_\alpha\in U$, because $x_0\in U$. If
$U=\{x\in X:g_{\alpha_U}(x)\ne c_U\}$ and $c_U\in G(X_0)$, then the relations
$c_U=g_{\gamma_U}(x_{\delta_U})$ and $x_0\in U$ and condition~(2) in
Definition~\ref{spec-seq} imply $x_\alpha\in U$. If $c_U\notin G(X_0)$, then
$g_{\alpha_U}(x_\alpha)\ne c_U$, and hence $x_\alpha\in U$. Therefore, $x_\alpha\in
\bigcap\U=\{x_0\}$, which is the desired contradiction.
\end{proof}

\section{$G$-topologizability of $G$-acts}\label{s:main}

In this section we apply the results of the preceding sections to prove the following
theorem, which is our main result.

\begin{theorem}\label{main0}
Let $X$ be a $G$-act over a monoid $G$  such that
$|G|\le\psi(x_0,\zeta_G)$ for some point $x_0\in X$. Then, for every infinite cardinal $\kappa$ satisfying $|G|\le\kappa\le \psi(x_0,\zeta_G)$ and every infinite cardinal $\lambda\le\cf(\kappa)$, the $G$-act $X$ admits $2^{2^\kappa}$
hereditarily normal $G$-topologies with pseudocharacter $\lambda$ at the point $x_0$.
\end{theorem}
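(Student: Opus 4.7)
\smallskip

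\noindent\textbf{Proof plan.} My plan is to combine Proposition~\ref{zar-spec}(1) with Theorem~\ref{t:spec}. The former, applied to the hypothesis $|G|\le\kappa\le\psi(x_0,\zeta_G)$, yields a special sequence $(x_\alpha)_{\alpha<\kappa}$ in $X$; writing $X_0=\{x_\alpha:\alpha<\kappa\}$, Theorem~\ref{t:spec} then converts any special filter $\varphi$ on $X$ with $\bigcap\varphi=\{x_0\}$ and $\psi(\varphi)=\lambda$ into a normal $G$-topology $\tau_\varphi$ with pseudocharacter $\lambda$ at $x_0$, and by item (3) of that theorem any two such filters whose $X_0$-traces $\{F\cap X_0:F\in\varphi\}$ differ produce distinct topologies. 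Thus it suffices to exhibit $2^{2^\kappa}$ special filters with the required properties and pairwise distinct $X_0$-traces.

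Since $\lambda\le\cf(\kappa)$, I would fix a partition $\kappa=\bigsqcup_{\xi<\lambda}K_\xi$ into $\lambda$ cofinal subsets with $0\in K_0$, set $Y_\xi=\{x_\alpha:\alpha\in K_\xi\}$, and introduce the tails $T_\gamma=\{x_0\}\cup\{x_\alpha:\alpha>\gamma\}$ and the \emph{masks} $E_\xi=\{x_0\}\cup\bigcup_{\eta\ne\xi}Y_\eta$. Let $\varphi_0$ be the filter on $X$ generated by $\{T_\gamma\}_{\gamma<\kappa}\cup\{E_\xi\}_{\xi<\lambda}$. A direct check shows that $\varphi_0$ is a proper special filter with $\bigcap\varphi_0=\{x_0\}$, and $\{E_\xi\}_{\xi<\lambda}$ gives $\psi(\varphi_0)\le\lambda$. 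For the matching lower bound, suppose $\mathcal{F}\subset\varphi_0$ has $|\mathcal{F}|<\lambda$: each $F\in\mathcal{F}$ dominates a generator determined by some $\gamma_F<\kappa$ and a finite $S_F\subset\lambda$, so $|\bigcup_F S_F|<\lambda$ and, since $|\mathcal{F}|<\lambda\le\cf(\kappa)$, also $\sup_F\gamma_F<\kappa$; then any $x_\alpha\in Y_\eta$ with $\eta\in\lambda\setminus\bigcup_F S_F$ and $\alpha>\sup_F\gamma_F$ belongs to every $F\in\mathcal{F}$, preventing $\bigcap\mathcal{F}=\{x_0\}$ and giving $\psi(\varphi_0)\ge\lambda$.

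To produce $2^{2^\kappa}$ distinct extensions, I would enumerate uniform ultrafilters $\mathcal{U}$ on $Y_0\setminus\{x_0\}$ (of which there are $2^{2^\kappa}$ by Posp\'{\i}\v{s}il) and, for each, define $\varphi_\mathcal{U}$ as the filter generated by $\varphi_0$ together with the auxiliary family $\{\{x_0\}\cup W\cup(X\setminus Y_0):W\in\mathcal{U}\}$. The pseudobase $\{E_\xi\}$ still witnesses $\psi(\varphi_\mathcal{U})\le\lambda$, and the pigeonhole above carries over (now also tracking the added $\mathcal{U}$-parameters) to give $\psi(\varphi_\mathcal{U})\ge\lambda$. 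For the distinctness of topologies, given $\mathcal{U}\ne\mathcal{U}'$, I would pick $W\in\mathcal{U}\setminus\mathcal{U}'$ and argue that the set $\{x_0\}\cup W\cup(X_0\setminus Y_0)$, which is the $X_0$-trace of the generator $\{x_0\}\cup W\cup(X\setminus Y_0)\in\varphi_\mathcal{U}$, cannot appear as $F'\cap X_0$ for any $F'\in\varphi_{\mathcal{U}'}$; Theorem~\ref{t:spec}(3) then gives $\tau_{\varphi_\mathcal{U}}\ne\tau_{\varphi_{\mathcal{U}'}}$, and Theorem~\ref{t:spec}(1),(4) finishes the proof for each $\varphi_\mathcal{U}$.

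The main obstacle I foresee is precisely this last trace-distinction: the mask $E_0\in\varphi_0\subseteq\varphi_{\mathcal{U}'}$ threatens to let $F'=E_0\cup W\in\varphi_{\mathcal{U}'}$ realize the offending trace, collapsing distinctness. The natural fix is to remove $E_0$ from the generators of $\varphi_0$ and let its ``kill $Y_0$'' role be absorbed into the $\mathcal{U}$-dependent family, so that behavior on $Y_0$ is genuinely controlled by $\mathcal{U}$; one must then re-run the pseudocharacter pigeonhole with the updated generator list, and the interplay between keeping $\psi(\varphi_\mathcal{U})$ exactly $\lambda$ and producing a genuine $\mathcal{U}$-dependence on the $X_0$-trace is the technical heart of the argument, where the hypothesis $\lambda\le\cf(\kappa)$ is used twice: once to split $\kappa$ into $\lambda$ cofinal pieces, and once to bound $\sup_F\gamma_F$ below $\kappa$.
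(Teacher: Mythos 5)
Your skeleton (special sequence from Proposition~\ref{zar-spec}(1), filters refining the tail filter, Theorem~\ref{t:spec} for normality, pseudocharacter and trace-distinctness) is exactly the paper's, and your mask construction does give a single special filter $\varphi_0$ with $\psi(\varphi_0)=\lambda$. But the multiplicity step has a genuine gap, and it is worse than the ``threat'' you describe: the construction does not merely lose distinctness, it collapses entirely. Since $E_0=\{x_0\}\cup(X_0\setminus Y_0)$ is a generator of $\varphi_0$ and every new generator $\{x_0\}\cup W\cup(X\setminus Y_0)$ contains $\{x_0\}\cup(X\setminus Y_0)\supseteq E_0$, each new generator already contains a member of $\varphi_0$ and is therefore redundant; one checks $\varphi_{\mathcal U}=\varphi_0$ for \emph{every} $\mathcal U$, so you produce one topology, not $2^{2^\kappa}$. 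Your proposed repair (drop $E_0$ and let $\mathcal U$ take over the job of killing $Y_0$) is left unexecuted, and it is not routine: once $E_0$ is gone, the upper bound $\psi(\varphi_{\mathcal U})\le\lambda$ requires $\mathcal U$ to contain a subfamily of size $\le\lambda$ whose intersection avoids a tail of $Y_0$. This fails, for instance, for a countably complete uniform ultrafilter when $\lambda=\omega$, so you must restrict to suitably incomplete ultrafilters and still count $2^{2^\kappa}$ of them. (A smaller issue: you also need $|Y_0|=\kappa$, not just cofinality, for Posp\'{\i}\v{s}il to yield $2^{2^\kappa}$ uniform ultrafilters; a cofinal piece of a singular $\kappa$ can have size $\cf(\kappa)<\kappa$.)

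The paper sidesteps all of this by decoupling the two roles you are trying to make one partition serve. It splits $X_0$ into two disjoint pieces $X_0'\ni x_0$ and $X_0''$, each of cardinality $\kappa$, and for each ultrafilter $u$ on $X_0''$ extending the tail filter $\varphi_0|X_0''$ sets $\varphi_u=\{A\subset X: A\cap X_0'\in\varphi_0|X_0',\ A\cap X_0''\in u\}$. The crucial difference from your masks is that membership in $\varphi_u$ \emph{constrains} the trace on $X_0''$ to lie in $u$ rather than allowing $X_0''$ to be discarded; hence for $W\in u\setminus v$ no member of $\varphi_v$ can have $X_0$-trace $X_0'\cup W$, and Theorem~\ref{t:spec}(3) separates the topologies. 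The pseudocharacter is exactly $\cf(\kappa)$ because $u$ refines the tail filter on $X_0''$ (so $\cf(\kappa)$ tails already kill $X_0''$, regardless of how complete $u$ is), while the $X_0'$-component forces the lower bound. To reach a prescribed $\lambda<\cf(\kappa)$ the paper does not partition $\kappa$ into $\lambda$ cofinal pieces as you do, but reindexes $\kappa$ as $\kappa\times\lambda$ in lexicographic order and crosses the previous filters with the cofinite filter on $\lambda$. If you want to salvage your version, the cleanest route is to adopt this two-piece decoupling rather than to repair the mask family.
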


\begin{proof} By Proposition~\ref{zar-spec}, the space $X$ contains a discriminate sequence
$X_0=\{x_\alpha\}_{\alpha<\kappa}$. Let $\varphi_0$ be the filter on $X$ generated by the sets
$\{x_0\}\cup\{x_\beta:\beta>\alpha\}$, where $\alpha<\kappa$, and let
${\uparrow}\varphi_0$ denote the set of all filters $\varphi$ on $X$ which contain the filter
$\varphi_0$ and satisfy the condition $\cap\varphi=\{x_0\}$.

\begin{claim}
For any infinite cardinal $\lambda\le\cf(\kappa)$, the set
$\F_\lambda=\{\varphi\in{\uparrow}\varphi_0:\psi(\varphi)=\lambda\}$ has cardinality $|\mathcal
F_\lambda|=2^{2^\kappa}$.
\end{claim}

\begin{proof} First, observe that the family of all filters on the set $X_0$ has cardinality $\le
2^{2^\kappa}$. Thus, $|\F_\lambda|\le 2^{2^\kappa}$. To prove the reverse inequality, we consider
two cases. \smallskip

\emph{Case 1}: $\lambda=\cf(\kappa)$. Let us represent the set $X_0$ as the disjoint union
$X_0=X_0'\cup X_0''$ of two sets of cardinality $|X_0'|=|X_0''|=\kappa$ such that $x_0\in X_0'$. On
the set $X_0'$ consider the filter $\varphi_0|X_0'=\{F\cap X_0':F\in\varphi_0\}$. Posp\'i\v sil's
Theorem~\cite{Pos} (see also \cite{Jech}) implies that the family $\U_0$ of all ultrafilters on
$X_0''$ which contain the filter  $\varphi_0|X_0''=\{F\cap X_0'':F\in\varphi_0\}$ has cardinality
$2^{2^\kappa}$. For any ultrafilter $u\in\U_0$, consider the filter $\varphi_u=\{A\subset X:A\cap
X_0''\in u,\;A\cap X_0'\in\varphi_0|X_0'\}$ and note that
$\psi(\varphi_u)=\psi(\varphi_0|X_0')=\cf(\kappa)$. For distinct ultrafilters $u,v\in\U_0$,
the filters $\varphi_u$ and $\varphi_v$ are distinct; therefore,
$|\F_{\cf(\kappa)}|\ge2^{2^\kappa}$.
\smallskip

\emph{Case 2}: $\lambda<\cf(\kappa)$. In this case the ordinal $\kappa$ can be identified with the
product $\kappa\times\lambda$ endowed with the lexicographic order,
in which $(\alpha,\beta)<(\alpha,\beta')$ if and only if either $\alpha<\alpha'$ or
$\alpha=\alpha'$ and $\beta<\beta'$. Let $\xi:\kappa\times\lambda\to\kappa$ be the order
isomorphism. Consider the filter $\varphi_\lambda$ of cofinite subsets
on the cardinal $\lambda$. This filter has pseudocharacter $\psi(\varphi_\lambda)=\lambda$.
As shown in Case~1, the family
$\F_{\cf(\kappa)}$ has cardinality $2^{2^\kappa}$. Given any filter $u\in\F_{\cf(\kappa)}$,
consider the filter $\varphi_u$ on $X$ generated by the sets
$$
\Phi_{U,L}=\{x_0\}\cup\{x_{\xi(\alpha,\beta)}:x_\alpha\in U,\;\beta\in L\}\mbox{ where }U\in
u,\;L\in\varphi_\lambda.
$$
It can be shown that $\psi(\varphi_u)=\psi(\varphi_\lambda)=\lambda$ and
the filters $\varphi_u$ and $\varphi_v$ are
distinct for any distinct filters $u,v\in \F_{\cf(\kappa)}$. Consequently,
$|\F_\lambda|\ge|\F_{\cf(\kappa)}|\ge 2^{2^\kappa}$.
\end{proof}

According to Theorem~\ref{t:spec}, for any filter $\varphi\in\F_\lambda$, the $G$-topology
$\tau_\varphi$ on $X$ is hereditarily normal and has pseudocharacter
$\psi(x_0,\tau_\varphi)=\psi(\varphi)=\lambda$ at $x_0$. Theorem~\ref{t:spec}(3) implies that
distinct filters $u,v\in\F_\lambda$  determine distinct
topologies $\tau_u$ and $\tau_v$.
Therefore, $X$ admits at least $|\F_\lambda|=2^{2^\kappa}$ hereditarily normal $G$-topologies with
pseudocharacter $\lambda$ at $x_0$.
\end{proof}

\begin{remark}
Example~\ref{ex2.7} shows that the condition
$|G|\le\kappa\le\psi(x_0,\zeta_G)$ in Theorem~\ref{main0} is sufficient but not necessary
for normal $G$-topologizability: the
group $G=H^\kappa$ is normally $G_s$-topologizable but
$\psi(x_0,\zeta_{G_s})=\kappa<2^\kappa=|G_s|$ for any point $x_0$.
On the other hand, this condition is necessary for the existence of a Hausdorff
$G$-topology with pseudocharacter $|G|$ (indeed, it follows directly from
Proposition~\ref{Zarisweak} that $\psi(x_0,\zeta_{G_s})\ge \psi(x_0,\tau)$ for any
Hausdorff $G$-topology $\tau$ on $X$).
Thus, for acts over monoids of
regular cardinality,
Theorem~\ref{main0} has the following corollary:
\emph{Let $X$ be a $G$-act over a monoid $G$ of regular cardinality $\kappa=|G|$,
and let $x_0\in X$. Then
$\psi(x_0,\zeta_G)=\kappa$ if and only if $X$ admits a Hausdorff
$G$-topology $\tau$ with $\psi(x_0,\tau)=\kappa$.}
\end{remark}

\begin{remark}
For each of the $S$-acts $G$ over $S\in\{G_l, G_r, G_s,
G_q, G_{[s]}, G_{[q]}, G^+_p, G_{p}\} \cup \{G_m:m\in\IZ\}$ defined in Section~\ref{s:zar}
and any infinite cardinals $\lambda$ and $\kappa$, $\lambda\le\cf(\kappa)$,
the condition $|G|\le\kappa\le\psi(x_0,\zeta_S)$ for some point $x_0\in X$
implies the existence of $2^{2^\kappa}$ hereditarily normal $S$-topologies with
pseudocharacter $\lambda$ at the point $x_0$. In particular, any group $G$ admits
$2^{2^{|G|}}$ hereditarily normal left-invariant  and $2^{2^{|G|}}$ hereditarily normal right-invariant
topologies.
\end{remark}

In the case of countable monoids $G$, Theorem~\ref{main0} implies the following
characterization of $G$-topologizability, which solves Problem~\ref{pr2} in this case.

\begin{theorem}\label{main}
For a countable monoid $G$ and a $G$-act $X$,
the following conditions are equivalent:
\begin{enumerate}
\item $X$ admits a non-discrete Hausdorff $G$-topology;
\item $X$ admits a non-discrete metrizable $G$-topology;
\item the Zariski $G$-topology $\zeta_G$ on $X$ is non-discrete;
\item $X$ admits $2^{\mathfrak c}$ non-discrete hereditarily normal $G$-topologies.
\end{enumerate}
\end{theorem}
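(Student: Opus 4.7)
The plan is to cycle through the three conditions, with the substantive work carried out by Theorem~\ref{main0} from the preceding section. The implication $(3)\Rightarrow(1)$ is immediate, since any normal topology is Hausdorff and a non-discrete normal $G$-topology is in particular a non-discrete Hausdorff $G$-topology. So the real content lies in $(1)\Rightarrow(2)$ and $(2)\Rightarrow(3)$.

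For $(1)\Rightarrow(2)$ I would argue by contraposition using the Proposition in Section~\ref{s:zar}: the Zariski $G$-topology $\zeta_G$ is contained in every Hausdorff $G$-topology on $X$. Hence if $\zeta_G$ is discrete, every singleton $\{x\}$ is already open in $\zeta_G$ and therefore open in any Hausdorff $G$-topology $\tau$ on $X$, forcing $\tau$ itself to be discrete. This contradicts (1), so $\zeta_G$ must be non-discrete.

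For $(2)\Rightarrow(3)$ I would reduce directly to Theorem~\ref{main0}. Since $\zeta_G$ is non-discrete, there exists a non-isolated point $x_0\in X$ in $(X,\zeta_G)$. Then $\{x_0\}$ is not open in $\zeta_G$, so any subfamily of $\tilde\zeta_G(x_0)$ whose intersection equals $\cap\tilde\zeta_G(x_0)=\{x_0\}$ must be infinite; consequently $\psi(x_0,\zeta_G)\ge\aleph_0$. Since $G$ is countable, $|G|\le\aleph_0\le\psi(x_0,\zeta_G)$, so we may apply Theorem~\ref{main0} with $\kappa=\lambda=\aleph_0$. This yields $2^{2^{\aleph_0}}=2^{\mathfrak c}$ normal $G$-topologies on $X$ having pseudocharacter $\aleph_0$ at $x_0$. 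A topology with $\psi(x_0,\tau)=\aleph_0$ has $\{x_0\}$ non-open (otherwise the pseudocharacter would be $1$), so each of these topologies is non-discrete, completing the proof of (3).

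The argument is essentially bookkeeping once Theorem~\ref{main0} is in hand; the main (minor) obstacle is just noticing the two easy but crucial facts that a non-isolated point of $(X,\zeta_G)$ automatically has infinite pseudocharacter in $\zeta_G$, and that pseudocharacter exactly $\aleph_0$ at $x_0$ guarantees non-discreteness of the resulting $G$-topology. No calculation or new construction is needed beyond invoking the earlier Proposition and Theorem~\ref{main0}.
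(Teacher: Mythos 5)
Your proposal is correct and follows exactly the route the paper intends: the paper derives Theorem~\ref{main} as an immediate consequence of Theorem~\ref{main0} (together with the Proposition that $\zeta_G$ lies in every Hausdorff $G$-topology), and you have simply written out the bookkeeping it leaves implicit, including the two key observations that a non-isolated point of $(X,\zeta_G)$ has $\psi(x_0,\zeta_G)\ge\aleph_0$ and that pseudocharacter $\aleph_0$ at $x_0$ forces non-discreteness.
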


\begin{proof} The equivalence $(1)\Leftrightarrow(3)\Leftrightarrow(4)$
follows from Theorems~\ref{t:spec} and \ref{main0}. The implication $(2)\Rightarrow (1)$
is trivial. It remains to prove that $(3)\Rightarrow(2)$.
If the Zariski $G$-topology $\zeta_G$ on $X$ is non-discrete,
then some point $x_0\in X$ has infinite pseudocharacter $\psi(x_0,\zeta_G)$.
By Proposition~\ref{zar-spec}, the $G$-act contains a discriminate sequence $(x_n)_{n<\w}$.
Let $\varphi$ be the discriminate filter on $X$ generated by the base consisting
of the sets $\{x_0\}\cup\{x_m:m\ge n\}$, $n\in\w$, and let
$\tau_\varphi=\{U\subset X:\forall g\in G\;g^{-1}(U)\in\varphi\}$ be the $G$-topology
on $X$ generated by the filter $\varphi$. By Theorem~\ref{t:spec}(2),
the orbit $G(X_0)$ of the set $X_0=\{x_n\}_{n<\w}$ is open in $(X,\tau_\varphi)$,
and its complement $X\setminus G(X_0)$ is an open discrete subspace of $X$.

By Theorem~\ref{t:spec}(1,4), the topology $\tau_\varphi$
is non-discrete and hereditary normal, which implies that the open subspace $G(X_0)$
is Tychonoff. Therefore, given any distinct points $x,y\in G(X_0)$, we can find
a continuous function $f_{x,y}:G(X_0)\to[0,1]$ such that $f_{x,y}(x)=0$ and $f_{x,y}(y)=1$.
Since the monoid $G$ is countable, so is the function family
$$
\F=\{f_{x,y}\circ g\colon g\in G,\;\;x,y\in G(X_0),\;x\ne y\}.
$$
Choose any enumeration $\F=\{f_n\}_{n\in\w}$ of the family $\F$ and consider
the continuous metric $d$ on $G(X_0)$ defined by
$$
d(x,y)=\max_{n\in\w}\frac1{2^n}{\big|f_n(x)-f_n(y)\big|}.
$$
We extend the metric $d$ to the whole space $X$ by setting $d(x,x)=0$
for any $x\in X$ and $d(x,y)=1$ for any pair $(x,y)\in X^2\setminus G(X_0)^2$ with $x\ne y$.
Since $X\setminus G(X_0)$ is an open-and-closed discrete subspace of $X$,
the  metric $d$ thus extended remains continuous. It can be shown that
each function $g\in G\subset X^X$
is continuous with respect to the metric $d$; hence the topology $\tau_d$ on $X$
generated by the metric $d$ is a $G$-topology, which is contained in the $G$-topology
$\tau_\varphi$ and therefore not discrete. Thus, $\tau_d$ is the required non-discrete
metrizable $G$-topology on $X$.
\end{proof}

Note that Theorem~\ref{main} applies also to finite monoids $G$. Indeed, it is easy
to reduce the finite case to the infinitely countable one by considering
the direct product $G\times S$ of a given finite monoid $G$ and any infinite
countable monoid $S$ acting trivially on $X$. Clearly, for the product action
of $G\times S$, we have $\zeta_G = \zeta_{G\times S}$, and any $G$-topology
on $X$ is a $(G\times S)$-topology.

We do not know whether this theorem remains valid for arbitrary $G$-acts.

\begin{problem}\label{genpro}
Let $G$ be an uncountable monoid (group), and let $X$ be a  $G$-act
for which the Zariski $G$-topology $\zeta_G$ is non-discrete. Is it true that $X$ is
$G$-topologizable?
\end{problem}

In solving this problem, results of \cite{HPS} may be useful.

The general Problem~\ref{genpro} is also interesting in special cases, for example, where $X$ is a
(semi)group, the action is assumed to satisfy additional conditions (e.g., be
transitive, faithful, or free), or the orbit space has certain properties (e.g., is countable).

\section{Acknowledgement}

The authors would like to thank the anonymous referee for careful reading the manuscript and useful comments, which helped the authors to prove new results and improve the
exposition.

\end{document}